\newtheorem{theorem}{Theorem}[section]
\newtheorem{cor}[theorem]{Corollary}
\theoremstyle{definition}
\theoremstyle{remark}
\newtheorem{remark}[theorem]{\bf{Remark}}
\numberwithin{equation}{section}
\begin{document}

\title []    {{  Numerical radius inequalities of bounded linear operators and  $(\alpha,\beta)$-normal operators  }}

%of

\author[P. Bhunia]{Pintu Bhunia}

%Department of Mathematics, Indian Institute of Science, Bengaluru 560012, Karnataka, India

\address{Department of Mathematics, Indian Institute of Science, Bengaluru 560012, Karnataka, India}
\email{pintubhunia5206@gmail.com}

%\thanks will become a 1st page footnote.
%\thanks{We would like to thank the referee for his/her extremely fruitful suggestions. First author  would like to thank UGC, Govt. of India for the financial support in the form of SRF. Prof. Kallol Paul would like to thank RUSA 2.0, Jadavpur University for the partial support.}

\thanks{The author  would like to sincerely acknowledge Prof. Kallol Paul for his valuable comments on this paper.}
	\thanks{ The author also would like to thank SERB, Govt. of India for the financial support in the form of National Post Doctoral Fellowship (N-PDF, File No. PDF/2022/000325) under the mentorship of Prof. Apoorva Khare. }

\thanks{}
%    Information for second author
%    General info

\subjclass[2020]{47A12, 47A30, 15A60}
\keywords {Numerical radius, operator norm, $(\alpha,\beta)$-normal operator, Bounded linear operator}

%\date{}
\maketitle

\begin{abstract}
We obtain various upper bounds for the numerical radius $w(T)$ of a bounded linear operator $T$ defined on a complex Hilbert space $\mathcal{H}$, by developing the upper bounds for the $\alpha$-norm of $T$, which is defined as  $\|T\|_{\alpha}= \sup \left\{ \sqrt{\alpha |\langle Tx,x \rangle|^2+ (1-\alpha)\|Tx\|^2 } : x\in \mathcal{H}, \|x\|=1 \right\}$ for $ 0\leq \alpha \leq 1 $. Further, we prove that  
	\begin{eqnarray*}
		w(T) &\leq & \sqrt{\left( \min_{\alpha \in [0,1]}\left\| \alpha |T|+(1-\alpha)|T^*| \right\| \right) \|T\|} \,\,\,\, \leq \,\, \,\, \|T\|.
	\end{eqnarray*} 
	 %where $|T|=(T^*T)^{1/2}$ and $|T^*|=(TT^*)^{1/2}$.
	  For $0\leq \alpha \leq 1 \leq \beta,$ the operator $T$ is called $(\alpha,\beta)$-normal if $\alpha^2 T^*T\leq TT^*\leq \beta^2 T^*T$ holds. Note that every invertible operator is an $(\alpha,\beta)$-normal operator for suitable values of $\alpha$ and $\beta$.  Among other lower bound for the numerical radius of an $(\alpha,\beta)$-normal operator $T$, we show that 
\begin{eqnarray*}
	w(T)	&\geq & \sqrt{\max \left\{ 1+\alpha^2, 1+\frac{1}{\beta^2}\right\} \frac{\|T\|^2}{4}+ \frac {\left| \|\Re(T)\|^2-\|\Im(T)\|^2  \right|}2} \\
	&\geq & \max \left\{ \sqrt{1+\alpha^2}, \sqrt{1+\frac{1}{\beta^2}} \right\} \frac{\|T\|}{2}  \\
	 &	> & \frac{\|T\|}2,
\end{eqnarray*}
where $\Re(T)$ and $\Im(T)$ are the real part and imaginary part of $T$, respectively.

%In particular, for a hyponormal operator $T$,  $w(T)	\geq \sqrt{ \frac{\|T\|^2}{2}+ \frac {\left| \|\Re(T)\|^2-\|\Im(T)\|^2  \right|}2} \geq \frac{\|T\|}{\sqrt{2}}.$
\end{abstract}

\section{\textbf{Introduction}}
\noindent
Let $\mathcal{B}(\mathcal{H})$ denote the $C^{*}$-algebra of all bounded linear operators on a complex Hilbert space $\mathcal{H}$, with inner product $\langle \cdot, \cdot \rangle.$  Let $T \in \mathcal{B}(\mathcal{H}).$ We write $|T|=(T^*T)^{1/2}$ and $|T^*|=(TT^*)^{1/2}$, where $T^*$ denotes the adjoint of $T.$
The numerical radius of $T$, denoted by $w(T)$, is defined as 
  $$ w(T)=\sup \{|\langle Tx,x \rangle| : x\in \mathcal{H}, \|x\|=1  \}.$$
Note that, the numerical radius defines a norm on  $\mathcal{B}(\mathcal{H})$, and it is equivalent to the operator norm, satisfies  
\begin{eqnarray}\label{eqv1}
	\frac12 \|T\| \leq w(T) \leq \|T\|, \,\mbox{ for every $T \in  \mathcal{B}(\mathcal{H}).$}
\end{eqnarray}
Various refinements of the inequalities in \eqref{eqv1} and related results have been discussed in a recent published book \cite{book}. Also, the reader can see the papers \cite{Bag_MIA_2020,Bhunia_LAA_2021, Kittaneh_STD_2005} and references therein.
 For $0\leq \alpha \leq 1$, the $\alpha$-norm of $T$ (see \cite{Bhunia_MMN_2022,Bhunia_JCA_2022,Sain_AFA_2021}) is given by $$ \|T\|_{\alpha}= \sup \left\{ \sqrt{\alpha |\langle Tx,x \rangle|^2+ (1-\alpha)\|Tx\|^2 } : x\in \mathcal{H}, \|x\|=1 \right\} .$$ 
 This $\alpha$-norm is also a norm on $\mathcal{B}(\mathcal{H})$, and it satisfies 
 \begin{eqnarray}\label{eqv2}
 	w(T) \leq \|T\|_{\alpha} \leq \|T\|, \,\mbox{ for every $T \in  \mathcal{B}(\mathcal{H})$ and for all $\alpha\in [0,1]$}.
 \end{eqnarray}
 The inequality \eqref{eqv2} together with \eqref{eqv1} implies that the $\alpha$-norm is equivalent to both the operator norm and the numerical radius norm. For more details of the $\alpha$-norm, the reader can see \cite{Sain_AFA_2021}. Next, we turn our attention to study the $(\alpha,\beta)$-normal operators (see \cite{DM2008}), where $0\leq \alpha \leq 1 \leq \beta .$ An operator $T\in \mathcal{B}(\mathcal{H})$ is said to be $(\alpha,\beta)$-normal operator if $T$ satisfies $\alpha^2 T^*T\leq TT^*\leq \beta^2 T^*T.$ This is equivalent to 
 $ \alpha^2 \langle T^*Tx,x \rangle \leq \langle TT^*x,x \rangle \leq \beta^2 \langle T^*Tx,x \rangle \,\, \mbox{for all $x\in \mathcal{H}$},$
 that is,
 $$ \alpha \|Tx\| \leq \|T^*x\| \leq \beta \|Tx\| \,\, \mbox{for all $x\in \mathcal{H}$}.$$
 Every normal and hyponormal operators are $(\alpha,\beta)$-normal  for suitable values of $\alpha, \beta.$ For normal operators $\alpha=\beta=1$ and for hyponormal operators $\beta=1.$ Also, there exist operators which are neither normal nor hyponormal. As for example, the matrix $\begin{pmatrix}
 	1&0\\
 	1&1
 \end{pmatrix}$  is an $(\alpha,\beta)$-normal with $\alpha= \sqrt{\frac{3-\sqrt{5}}{2}}$ and $\beta= \sqrt{\frac{3+\sqrt{5}}{2}}.$  However, the matrix is neither normal nor hyponormal.  
 If $T$ is an  $(\alpha,\beta)$-normal operator, then both, $T$ mazorizes $T^*$ and $T^*$ mazorizes $T$. According to Douglas (Mazorization lemma) \cite{Douglas}, an operator $T\in \mathcal{B}(\mathcal{H})$ mazorizes an operator $S\in \mathcal{B}(\mathcal{H})$ if one of the following equivalent statements holds: \\
 (i) $Ran(T) \subseteq Ran(S)$, where $Ran(T)$ ($Ran(S)$) denotes the range space of $T$ ($S$).\\ (ii) $TT^*\leq \mu^2 SS^*$ for some $\mu\geq 0.$\\
 Therefore, $T$ is an  $(\alpha,\beta)$-normal operator if and only if $Ran(T) = Ran(T^*).$ So, every invertible operator is an $(\alpha,\beta)$-normal operator for suitable values of $\alpha,\beta$.

 \noindent 
 
 In this paper, we develop upper bounds for the $\alpha$-norm of bounded linear operators and consequentially we obtain bounds for the numerical radius,  which improve on the existing ones. Further, we develop lower bounds for the numerical radius of the $(\alpha,\beta)$-normal operators, which improve on the classical bound $w(T)\geq \frac{\|T\|}2.$

\section{\textbf{ The $\alpha$-norm and numerical radius inequalities }}
 
\noindent We obtain  new upper bounds for the numerical radius of bounded linear operators, by developing the bounds of $\alpha$-norm. First, we recall some necessary inequalities from the literature. 
Kato \cite{KAT52} generalized the Cauchy-Schwarz inequality, namely,  
\begin{equation}	\label{kato}
	\left\vert \left\langle Tx,y\right\rangle \right\vert ^{2}\leq \left\langle
	\left\vert T\right\vert ^{2\alpha }x,x\right\rangle \left\langle \left\vert
	T^{\ast }\right\vert ^{2\left(1-\alpha \right) }y,y\right\rangle\,\,\mbox{$\forall x,y\in \mathcal{H}$, $\forall \alpha \in \left[ 0,1\right]$.}
\end{equation}
 In particular, for $\alpha=1/2$ (see \cite[pp. 75-76]{HAL}), 
\begin{align}\label{kato1}
	|\langle Tx,y\rangle|\leq \langle |T|x,x\rangle^{1/2}~~\langle |T^*|y,y\rangle^{1/2} \,\,\, \forall x,y\in \mathcal{H}.
\end{align}
In \cite[Theorem 1]{KIT88}, Kittaneh generalized the Kato's inequality, which is   as follows:
If $f,g: [0,\infty]\to [0,\infty]$ are continuous functions satisfying $f(t)g(t)=t, ~~\forall t\geq 0$, then
%If $f$ and $g$ are two non-negative continuous functions on $[0,\infty)$ satisfying $f(t)g(t)=t, ~~t\geq 0$,  then
	\begin{eqnarray}\label{kato2}
		\left |\langle Tx,y \rangle \right|^2\leq \left \langle f^2(|T|)x,x \right \rangle \left \langle g^2(|T^*|)y,y \right \rangle \,\,\, 	\forall  x,y\in \mathcal{H}.
	\end{eqnarray}
 Next, we recall the H\"{o}lder--McCarthy inequality (see \cite[p. 20]{SIM}) which states that 
	if $T\in \mathcal{B}(\mathcal{H})$ is positive,   then 
	\begin{eqnarray}\label{mc1}
		\langle Tx,x\rangle^r\leq \langle T^rx,x\rangle \quad \forall r\geq 1 \,\,\, \forall x \in \mathcal{H}, \|x\|=1.
	\end{eqnarray}
The inequality (\ref{mc1}) is reversed when $0\leq r\leq 1$. The Cartesian decomposition of $T\in \mathcal{B}(\mathcal{H})$ is given by $T=\Re(T)+i \Im(T)$, where $\Re(T)=\frac12(T+T^*)$ and $\Im(T)=\frac1{2i}(T-T^*)$. 
Now, we are in a position to prove our first result.

\begin{theorem}\label{th1}
	If $T\in \mathcal{B}(\mathcal{H})$, then
	$$ \|T\|_{\alpha}^2 \leq \left \|\frac{\alpha}4\left( f^4(|T|)+g^4(|T^*|)  \right)+(1-\alpha)|T|^2   \right\|+ \frac{\alpha}2 \left \|\Re \left(f^2(|T|)g^2(|T^*|) \right)  \right\|,$$
	where $f$ and $g$ are as in \eqref{kato2}.
\end{theorem}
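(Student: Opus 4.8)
The plan is to argue directly from the definition $\|T\|_{\alpha}^2 = \sup\{\alpha|\langle Tx,x\rangle|^2 + (1-\alpha)\|Tx\|^2 : \|x\|=1\}$ and to bound the expression inside the supremum for a fixed unit vector $x$. For the numerical-radius term I would apply Kittaneh's generalization \eqref{kato2} of Kato's inequality with $y=x$, giving $|\langle Tx,x\rangle|^2 \le \langle f^2(|T|)x,x\rangle\,\langle g^2(|T^*|)x,x\rangle$. Writing $A=f^2(|T|)\ge 0$ and $B=g^2(|T^*|)\ge 0$, the right-hand side is a product of two nonnegative reals, which I would then handle via the elementary bound $ab\le \frac14(a+b)^2$.

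Thus $|\langle Tx,x\rangle|^2 \le \frac14\langle (A+B)x,x\rangle^2$. Since $A+B$ is positive, the H\"older--McCarthy inequality \eqref{mc1} with $r=2$ yields $\langle (A+B)x,x\rangle^2 \le \langle (A+B)^2 x,x\rangle$. Expanding $(A+B)^2 = A^2+B^2+(AB+BA)$ and using that $A,B$ are self-adjoint, so that $AB+BA = 2\Re(AB)$, I obtain
$$|\langle Tx,x\rangle|^2 \le \frac14\langle (A^2+B^2)x,x\rangle + \frac12\langle \Re(AB)x,x\rangle .$$
Combining this with $\|Tx\|^2 = \langle |T|^2 x,x\rangle$ and recalling $A^2=f^4(|T|)$, $B^2=g^4(|T^*|)$, I get for every unit vector $x$
$$\alpha|\langle Tx,x\rangle|^2 + (1-\alpha)\|Tx\|^2 \le \left\langle \left(\frac{\alpha}{4}\bigl(f^4(|T|)+g^4(|T^*|)\bigr) + (1-\alpha)|T|^2\right)x,\, x\right\rangle + \frac{\alpha}{2}\bigl\langle \Re\bigl(f^2(|T|)g^2(|T^*|)\bigr)x,\, x\bigr\rangle .$$

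Since $\frac{\alpha}{4}(f^4(|T|)+g^4(|T^*|)) + (1-\alpha)|T|^2$ is a positive operator and $\Re(f^2(|T|)g^2(|T^*|))$ is self-adjoint, each inner product above is dominated by the corresponding operator norm, and taking the supremum over unit vectors $x$ gives the asserted inequality. I do not anticipate a genuine obstacle, as the argument is a short chain of results already recorded in the excerpt; the only points that need care are the choice to split the product as $ab\le\frac14(a+b)^2$ (rather than $ab\le\frac12(a^2+b^2)$, which would produce a different and generally incomparable estimate) and the bookkeeping that merges the $f^4,g^4$ contribution with the $(1-\alpha)|T|^2$ term before passing to norms.
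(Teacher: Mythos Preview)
Your proof is correct and follows essentially the same route as the paper: apply \eqref{kato2} with $y=x$, use $ab\le\frac14(a+b)^2$, apply H\"older--McCarthy \eqref{mc1} with $r=2$ to the positive operator $f^2(|T|)+g^2(|T^*|)$, expand the square, and pass to norms after combining with the $(1-\alpha)\|Tx\|^2$ term. The only cosmetic difference is your convenient shorthand $A=f^2(|T|)$, $B=g^2(|T^*|)$.
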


\begin{proof}
	Let $x\in \mathcal{H}$ with $\|x\|=1.$ Then,  it follows from \eqref{kato2} that 
		\begin{eqnarray*}
		\left |\langle Tx,x \rangle \right|^2 &\leq& \left \langle f^2(|T|)x,x \right \rangle \left \langle g^2(|T^*|)x,x \right \rangle\\
		& =& \left ( \left \langle f^2(|T|)x,x \right \rangle^{1/2} \left \langle g^2(|T^*|)x,x \right \rangle^{1/2}\right )^2\\
		&\leq & \frac14 \left ( \left \langle f^2(|T|)x,x \right \rangle+ \left \langle g^2(|T^*|)x,x \right \rangle\right )^2\\
		&= & \frac14  \left \langle \left ( f^2(|T|)+ g^2(|T^*|)\right )x,x \right \rangle^2\\
		&\leq & \frac14  \left \langle \left ( f^2(|T|)+ g^2(|T^*|)\right )^2 x,x \right \rangle \,\,\, (\mbox{by using  \eqref{mc1} } )\\
		&=& \frac14  \left \langle \left ( f^4(|T|)+ g^4(|T^*|)\right ) x,x \right \rangle + \frac12 \left \langle \Re \left(f^2(|T|)g^2(|T^*|) \right) x,x \right \rangle.
	\end{eqnarray*}
Thus, we have
\begin{eqnarray*}
	\|T\|_{\alpha}^2 &=& \sup_{\|x\|=1} \left \{ \alpha \left |\langle Tx,x \rangle \right|^2+ (1-\alpha)\|Tx\|^2 \right \}\\
	 &\leq &  \sup_{\|x\|=1} \left \langle \left( \frac \alpha 4\left ( f^4(|T|)+ g^4(|T^*|)\right )+ (1-\alpha)|T|^2 \right) x,x \right \rangle\\
	  && + \frac\alpha2 \sup_{\|x\|=1} \left |\left \langle \Re \left(f^2(|T|)g^2(|T^*|) \right) x,x \right \rangle \right|\\
	& =&  \left \| \frac \alpha 4\left ( f^4(|T|)+ g^4(|T^*|)\right )+ (1-\alpha)|T|^2 \right \| + \frac\alpha2 \left \|\Re \left(f^2(|T|)g^2(|T^*|) \right) \right\|,
\end{eqnarray*}
as required.
\end{proof}

Now, as a consequence of Theorem \ref{th1}, we get  the following corollary.
%Now, the following corollary follows from Theorem \ref{th1}.	

\begin{cor}\label{cor1}
		If $T\in \mathcal{B}(\mathcal{H})$, then
		\begin{eqnarray}\label{eqn1}
			\|T\|_{\alpha}^2 &\leq & \left \| \left (1-\frac{3\alpha}4 \right) |T|^2+ \frac \alpha 4 |T^*|^2 \right \|+ \frac \alpha 2 \left \| \Re (|T||T^*|)\right \|
		\end{eqnarray}
		  and 
		  \begin{eqnarray}\label{eqn2}
		  	\|T\|_{\alpha}^2 &\leq & \left \| \left (1-\frac{3\alpha}4 \right) |T^*|^2+ \frac \alpha 4 |T|^2 \right \|+ \frac \alpha 2 \left \| \Re (|T||T^*|)\right \|.
		  \end{eqnarray}
	  Further, 
	  \begin{eqnarray}\label{cor1_2}
	   w(T) \leq \min \{ \gamma, \delta\},
\end{eqnarray} 
  \mbox{where}
	  $$ \gamma= \min_{\alpha \in [0,1]} \left \{ \left \| \left (1-\frac{3\alpha}4 \right) |T|^2+ \frac \alpha 4 |T^*|^2 \right \|+ \frac \alpha 2 \left \| \Re (|T||T^*|)\right \|  \right \}^{1/2} $$
	  and 
	  $$ \delta= \min_{\alpha \in [0,1]} \left\{ \left \| \left (1-\frac{3\alpha}4 \right) |T^*|^2+ \frac \alpha 4 |T|^2 \right \|+ \frac \alpha 2 \left \| \Re (|T||T^*|)\right \| \right\}^{1/2}.$$
\end{cor}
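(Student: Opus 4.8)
The plan is to specialize Theorem \ref{th1} to the simplest admissible pair of functions and then pass from the $\alpha$-norm to the numerical radius via the known comparison \eqref{eqv2}.

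First I would take $f(t)=g(t)=\sqrt{t}$ in Theorem \ref{th1}: these are continuous and satisfy $f(t)g(t)=t$, hence are admissible in \eqref{kato2}. For this choice $f^2(|T|)=|T|$, $g^2(|T^*|)=|T^*|$, $f^4(|T|)=|T|^2$ and $g^4(|T^*|)=|T^*|^2$, so the conclusion of Theorem \ref{th1} becomes
$$\|T\|_{\alpha}^2 \le \left\|\tfrac{\alpha}{4}\bigl(|T|^2+|T^*|^2\bigr)+(1-\alpha)|T|^2\right\|+\tfrac{\alpha}{2}\bigl\|\Re(|T||T^*|)\bigr\|.$$
Collecting the $|T|^2$ terms via $\tfrac{\alpha}{4}+(1-\alpha)=1-\tfrac{3\alpha}{4}$ gives exactly \eqref{eqn1}.

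For \eqref{eqn2} I would rerun the same specialization with $T$ replaced by $T^*$: then $|T^*|$ and $|(T^*)^*|=|T|$ exchange their roles, and since $\Re(|T^*||T|)=\Re(|T||T^*|)$ the resulting estimate is precisely the right-hand side of \eqref{eqn2} as an upper bound for $\|T^*\|_{\alpha}^2$. One then upgrades this to the stated bound on $\|T\|_{\alpha}^2$ using $\|T\|_{\alpha}=\|T^*\|_{\alpha}$; for the sole purpose of \eqref{cor1_2} this step is not even needed, since $w(T)=w(T^*)$ and $w(T^*)\le\|T^*\|_{\alpha}$ by \eqref{eqv2}.

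Finally, for \eqref{cor1_2}: by \eqref{eqv2}, $w(T)^2\le\|T\|_{\alpha}^2$ and $w(T)^2=w(T^*)^2\le\|T^*\|_{\alpha}^2$ for every $\alpha\in[0,1]$; bounding these by the right-hand sides of \eqref{eqn1} and \eqref{eqn2}, taking square roots (which preserves the order), and minimizing over the compact interval $\alpha\in[0,1]$ (the right-hand sides being continuous, indeed affine, in $\alpha$, so the minimum is attained) yields $w(T)\le\gamma$ and $w(T)\le\delta$, i.e. $w(T)\le\min\{\gamma,\delta\}$. I do not expect a genuine obstacle: the statement is a direct corollary of Theorem \ref{th1} together with $w(T)\le\|T\|_{\alpha}$. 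The only points deserving a line of justification are the identity $\Re(|T^*||T|)=\Re(|T||T^*|)$, the passage from the $T^*$-estimate to \eqref{eqn2} (or the observation that one only needs it for $w(T)=w(T^*)$), and the attainment of the minimum over $\alpha$.
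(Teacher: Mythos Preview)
Your proposal is correct and follows essentially the same route as the paper: specialize Theorem~\ref{th1} with $f(t)=g(t)=t^{1/2}$ to obtain \eqref{eqn1}, replace $T$ by $T^*$ and use $\|T\|_{\alpha}=\|T^*\|_{\alpha}$ for \eqref{eqn2}, and deduce \eqref{cor1_2} from $w(T)\le\min_{\alpha\in[0,1]}\|T\|_{\alpha}$. Your extra remarks (the identity $\Re(|T^*||T|)=\Re(|T||T^*|)$ and the attainment of the minimum) are sound and simply flesh out details the paper leaves implicit.
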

\begin{proof}
	The inequality \eqref{eqn1} follows from Theorem \ref{th1} by taking $f(t)=g(t)=t^{1/2}$. The inequality \eqref{eqn2} follows from \eqref{eqn1} by replacing  $T$ by $T^*$ and using the equality $\|T\|_{\alpha}=\|T^*\|_{\alpha}$ (see \cite[Proposition  2.6]{Sain_AFA_2021}). The inequality \eqref{cor1_2} follows from the fact that $w(T) \leq \min_{\alpha \in [0,1]} \|T\|_{\alpha}$.
\end{proof}

\begin{remark}\label{rem1} Recently, Bhunia and Paul obtained  in \cite[The inequality (2.3)]{Bhunia_ASM_2022} and \cite[Corollary 2.6]{Bhunia_BSM_2021}  that  
	$w^2(T) \leq \frac14  \left \| |T|^2+ |T^*|^2 \right \|+ \frac12  \left \| \Re (|T||T^*|)\right \|$
	and 
	$ w^2(T) \leq  \frac14  \left \| |T|^2+ |T^*|^2 \right \|+ \frac12  w (|T||T^*|),$ respectively.
	Clearly, we see that 
	\begin{eqnarray*} 
		\min \{ \gamma^2, \delta^2\} &\leq &\frac14  \left \| |T|^2+ |T^*|^2 \right \|+ \frac12  \left \| \Re (|T||T^*|)\right \|\\
		& \leq & \frac14  \left \| |T|^2+ |T^*|^2 \right \|+ \frac12  w (|T||T^*|).
	\end{eqnarray*}
	Thus, we would like to remark that the inequality \eqref{cor1_2} refines both the inequalities in \cite[The inequality (2.3)]{Bhunia_ASM_2022} and \cite[Corollary 2.6]{Bhunia_BSM_2021}. To show proper refinement, we consider a matrix $T=\begin{pmatrix}
		0&1&0\\
		0&0&2\\
		0&0&0
	\end{pmatrix}$. Then, by elementary calculations we see that $\gamma=\sqrt{\frac{28}{13}}$ and $\delta=\frac32$, and so
$$ \min \{ \gamma^2, \delta^2\}= \frac{28}{13} < \frac{9}{4}= \frac14  \left \| |T|^2+ |T^*|^2 \right \|+ \frac12  \left \| \Re (|T||T^*|)\right \|.$$
\end{remark}

%Next, we obtain the following upper bound for the $\alpha$-norm	of a bounded linear operator.

Another bound for the $\alpha$-norm reads as follows:
	
	\begin{theorem}\label{th2}
		If $T\in \mathcal{B}(\mathcal{H})$, then
		\begin{eqnarray*}
			\|T\|_{\alpha}^2 &\leq & \min \left\{ \left \| \alpha |T|^2+ (1-\alpha)|T^*|^2 \right \|, \left \| \alpha |T^*|^2+ (1-\alpha)|T|^2 \right \| \right\}.
		\end{eqnarray*} 
	\end{theorem}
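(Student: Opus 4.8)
The plan is to bound the quantity inside the supremum that defines $\|T\|_{\alpha}^{2}$ using only two elementary estimates on $|\langle Tx,x\rangle|$ for a unit vector $x$, and then to exploit the adjoint-invariance $\|T\|_{\alpha}=\|T^{*}\|_{\alpha}$ (see \cite[Proposition 2.6]{Sain_AFA_2021}) to produce the symmetric form of the bound. No appeal to Kato's inequality is needed; the Cauchy--Schwarz inequality alone suffices.

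First I would fix $x\in\mathcal{H}$ with $\|x\|=1$. By Cauchy--Schwarz, $|\langle Tx,x\rangle|\le\|Tx\|$; on the other hand, writing $\langle Tx,x\rangle=\langle x,T^{*}x\rangle$ gives $|\langle Tx,x\rangle|\le\|T^{*}x\|$ as well. Substituting the second estimate only in the term $\alpha|\langle Tx,x\rangle|^{2}$, and recalling $\|Tx\|^{2}=\langle|T|^{2}x,x\rangle$ and $\|T^{*}x\|^{2}=\langle|T^{*}|^{2}x,x\rangle$, one obtains
\[
\alpha|\langle Tx,x\rangle|^{2}+(1-\alpha)\|Tx\|^{2}\ \le\ \alpha\|T^{*}x\|^{2}+(1-\alpha)\|Tx\|^{2}=\big\langle\big(\alpha|T^{*}|^{2}+(1-\alpha)|T|^{2}\big)x,\,x\big\rangle .
\]
Since $\alpha|T^{*}|^{2}+(1-\alpha)|T|^{2}$ is a positive operator, taking the supremum over all unit vectors $x$ yields $\|T\|_{\alpha}^{2}\le\big\|\alpha|T^{*}|^{2}+(1-\alpha)|T|^{2}\big\|$.

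To get the other member of the minimum, I would apply this last inequality to $T^{*}$ in place of $T$; since $|T^{**}|=|T|$, this gives $\|T^{*}\|_{\alpha}^{2}\le\big\|\alpha|T|^{2}+(1-\alpha)|T^{*}|^{2}\big\|$, and then $\|T\|_{\alpha}=\|T^{*}\|_{\alpha}$ converts it into $\|T\|_{\alpha}^{2}\le\big\|\alpha|T|^{2}+(1-\alpha)|T^{*}|^{2}\big\|$. Taking the smaller of the two bounds finishes the proof. There is no real obstacle here; the only point that needs a moment's thought is the choice of which trivial estimate to use in the mixed term — using $\|Tx\|$ collapses the right-hand side to $\|T\|^{2}$, whereas using $\|T^{*}x\|$ produces the genuinely smaller positive operator $\alpha|T^{*}|^{2}+(1-\alpha)|T|^{2}$ — followed by the use of the symmetry $\|T\|_{\alpha}=\|T^{*}\|_{\alpha}$ to recover the second term of the minimum.
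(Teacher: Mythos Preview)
Your proof is correct and is essentially identical to the paper's: both bound $|\langle Tx,x\rangle|^{2}=|\langle x,T^{*}x\rangle|^{2}$ by $\|T^{*}x\|^{2}$ via Cauchy--Schwarz, take the supremum to obtain $\|T\|_{\alpha}^{2}\le\|\alpha|T^{*}|^{2}+(1-\alpha)|T|^{2}\|$, and then invoke $\|T\|_{\alpha}=\|T^{*}\|_{\alpha}$ for the second bound. The only difference is expository---you add a remark about why $\|T^{*}x\|$ is the right choice---but the argument itself matches the paper line by line.
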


\begin{proof}
	Let $x\in \mathcal{H}$ with $\|x\|=1.$ By using Cauchy-Schwarz inequality, we get
	 \begin{eqnarray}\label{eqn3}
		\|T\|_{\alpha}^2 &=&  \sup_{\|x\|=1} \left \{ \alpha \left |\langle Tx,x \rangle \right|^2+ (1-\alpha)\|Tx\|^2 \right \}\nonumber\\
		&=&  \sup_{\|x\|=1} \left \{ \alpha \left |\langle x,T^*x \rangle \right|^2+ (1-\alpha)\|Tx\|^2 \right \}\nonumber\\
		&\leq& \sup_{\|x\|=1} \left \{ \alpha \|T^*x\|^2+ (1-\alpha)\|Tx\|^2 \right \}\nonumber\\
			&=& \sup_{\|x\|=1} \left \{ \alpha \langle |T^*|^2x,x\rangle + (1-\alpha)\langle |T|^2x,x \rangle  \right \}\nonumber\\
			&=& \sup_{\|x\|=1}  \langle (\alpha |T^*|^2+ (1-\alpha) |T|^2)x,x\rangle \nonumber\\
			&=& \| \alpha |T^*|^2+ (1-\alpha) |T|^2 \|.
	\end{eqnarray}
Replacing $T$ by $T^*$ in \eqref{eqn3} and using $\|T\|_{\alpha}=\|T^*\|_{\alpha}$, we get
\begin{eqnarray}\label{eqn4}
	\|T\|_{\alpha}^2  & \leq & \| \alpha |T|^2+ (1-\alpha) |T^*|^2 \|.
\end{eqnarray}
Therefore, the required inequality follows by combining \eqref{eqn3} and \eqref{eqn4}.
\end{proof}

%Note that, $w(T) \leq \|T\|_{\alpha} $ for all $\alpha \in [0,1]$. 
Now, the following upper bound for the numerical radius follows easily  from Theorem \ref{th2} together with the first inequality in \eqref{eqv2}:
\begin{eqnarray}\label{pp0}
	w^2(T) &\leq & \min_{\alpha \in [0,1]} \left\| \alpha |T|^2+(1-\alpha)|T^*|^2 \right\|.
\end{eqnarray}
	Note that, the bound (\ref{pp0}) is  also obtained in \cite{Bhunia_RIM_2021}. 
	To obtain our next result we need the Buzano's inequality \cite{Buzano}, namely, if $x,y,e\in \mathcal{H}$ with $\|e\|=1,$ then
	\begin{eqnarray}\label{buza}
		2 |\langle x,e\rangle \langle e,y\rangle| &\leq & \|x\| \|y\|+ |\langle x,y\rangle|.
	\end{eqnarray}
By employing the Buzano's inequality \eqref{buza}, we prove the following theorem.
\begin{theorem}\label{th3}
	If $T\in \mathcal{B}(\mathcal{H})$, then
	\begin{eqnarray*}
		\|T\|_{\alpha}^2 & \leq & \frac \alpha 4 w^2(|T|+i |T^*|)+ \frac \alpha 4 w(|T| |T^*|)+ \left \|  \left(1-\frac{7 \alpha}{8} \right)|T|^2+ \frac \alpha 8 |T^*|^2 \right \|.
	\end{eqnarray*}
\end{theorem}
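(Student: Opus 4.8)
The plan is to follow the same template as the proofs of Theorems~\ref{th1} and~\ref{th2}: fix a unit vector $x\in\mathcal H$, derive a pointwise estimate for $\alpha|\langle Tx,x\rangle|^2+(1-\alpha)\|Tx\|^2$, and then take the supremum over $\|x\|=1$. Write $s=\langle |T|x,x\rangle$ and $t=\langle |T^*|x,x\rangle$, both real and nonnegative since $|T|,|T^*|\ge 0$. The starting point is Halmos' form of Kato's inequality \eqref{kato1} with $y=x$, which gives $|\langle Tx,x\rangle|^2\le st$. Everything then reduces to estimating the product $st$.

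The heart of the argument is to split $st=\tfrac12 st+\tfrac12 st$ and treat the two halves differently. For the first half, the AM--GM inequality gives $\tfrac12 st\le\tfrac14(s^2+t^2)$, and since $s,t$ are real, $s^2+t^2=|\langle(|T|+i|T^*|)x,x\rangle|^2\le w^2(|T|+i|T^*|)$. For the second half, I would apply Buzano's inequality \eqref{buza} with the unit vector $e=x$ and the vectors $|T|x$, $|T^*|x$; because $\langle |T|x,x\rangle=s$ and $\langle x,|T^*|x\rangle=t$ are nonnegative reals, this yields $2st\le\||T|x\|\,\||T^*|x\|+|\langle |T|x,|T^*|x\rangle|$. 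Bounding $\||T|x\|\,\||T^*|x\|\le\tfrac12(\||T|x\|^2+\||T^*|x\|^2)=\tfrac12\langle(|T|^2+|T^*|^2)x,x\rangle$, and using $\langle |T|x,|T^*|x\rangle=\langle |T^*||T|x,x\rangle$ together with $|\langle |T^*||T|x,x\rangle|\le w(|T^*||T|)=w(|T||T^*|)$ (as $(|T||T^*|)^*=|T^*||T|$ and $w(A^*)=w(A)$), the second half contributes $\tfrac12 st\le\tfrac18\langle(|T|^2+|T^*|^2)x,x\rangle+\tfrac14 w(|T||T^*|)$.

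Adding the two halves gives $|\langle Tx,x\rangle|^2\le st\le\tfrac14 w^2(|T|+i|T^*|)+\tfrac14 w(|T||T^*|)+\tfrac18\langle(|T|^2+|T^*|^2)x,x\rangle$. Multiplying by $\alpha$ and adding $(1-\alpha)\|Tx\|^2=(1-\alpha)\langle|T|^2x,x\rangle$, the operator terms combine as
\[
\tfrac{\alpha}{8}\bigl(|T|^2+|T^*|^2\bigr)+(1-\alpha)|T|^2=\Bigl(1-\tfrac{7\alpha}{8}\Bigr)|T|^2+\tfrac{\alpha}{8}|T^*|^2 ,
\]
which is a positive operator for $\alpha\le 1$. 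Taking the supremum over unit vectors (using $\sup_{\|x\|=1}\langle Ax,x\rangle=\|A\|$ for positive $A$, and subadditivity of the supremum applied to the three summands) yields exactly
\[
\|T\|_{\alpha}^2\le\tfrac{\alpha}{4}w^2(|T|+i|T^*|)+\tfrac{\alpha}{4}w(|T||T^*|)+\Bigl\|\Bigl(1-\tfrac{7\alpha}{8}\Bigr)|T|^2+\tfrac{\alpha}{8}|T^*|^2\Bigr\| .
\]
I expect the only nonroutine point to be spotting that the even split (weight $\tfrac12$ on each half of $st$) is precisely what forces the coefficients $\tfrac{\alpha}{4}$, $\tfrac{\alpha}{4}$, $1-\tfrac{7\alpha}{8}$, $\tfrac{\alpha}{8}$; after that, the derivation is just bookkeeping with Kato, AM--GM, and Buzano.
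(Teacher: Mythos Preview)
Your proof is correct and follows essentially the same route as the paper's: start from \eqref{kato1}, split the product $\langle|T|x,x\rangle\langle|T^*|x,x\rangle$ into a part handled by AM--GM (giving the $w^2(|T|+i|T^*|)$ term) and a part handled by Buzano's inequality \eqref{buza} (giving the $w(|T||T^*|)$ and $|T|^2+|T^*|^2$ terms), then combine with $(1-\alpha)\|Tx\|^2$ and take the supremum. The only cosmetic difference is that the paper reaches the decomposition via $st\le\tfrac14(s+t)^2=\tfrac14(s^2+t^2)+\tfrac12 st$ and bounds the pointwise quantities by $w^2$ and $w$ at the final supremum step, whereas you write $st=\tfrac12 st+\tfrac12 st$ explicitly and pass to $w^2$ and $w$ earlier; the resulting chain of inequalities is identical.
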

	\begin{proof}
		Let $x\in \mathcal{H}$ with $\|x\|=1.$ Then, from the inequality \eqref{kato1}, we  get
		\begin{eqnarray*}
			&& |\langle Tx,x\rangle|^2 \\
			&\leq & \langle |T|x,x\rangle  \langle |T^*|x,x\rangle\\
			& \leq & \frac14 \left( \langle |T|x,x\rangle + \langle |T^*|x,x\rangle \right)^2\\
			&=& \frac14 (\langle |T|x,x\rangle^2 + \langle |T^*|x,x\rangle^2) + \frac12 \langle |T|x,x\rangle  \langle |T^*|x,x\rangle\\
			&=& \frac14 |\langle |T|x,x\rangle +i \langle |T^*|x,x\rangle|^2+ \frac12 \langle |T^*|x,x\rangle  \langle x,|T|x\rangle\\
			&\leq & \frac14 |\langle (|T|+i  |T^*|)x,x\rangle|^2+ \frac14 \| |T^*|x\| \,\| |T|x \| + \frac14 |\langle |T^*|x, |T|x \rangle| \,\,\, (\mbox{by \eqref{buza}})\\
			& \leq & \frac14 |\langle (|T|+i  |T^*|)x,x\rangle|^2+ \frac18 ( \| |T^*|x\|^2 +\| |T|x \|^2) + \frac14 |\langle |T||T^*|x, x \rangle|.
		\end{eqnarray*}
	Hence, 
\begin{eqnarray*}
&&	\alpha |\langle Tx,x\rangle|^2 	+ (1-\alpha) \|Tx\|^2\\
	& \leq & \frac \alpha 4 |\langle (|T|+i  |T^*|)x,x\rangle|^2
	 + \frac \alpha 4 |\langle |T||T^*|x, x \rangle|
	  +  \left \langle  \left( \left( 1-\frac {7\alpha}8 \right)   |T|^2+ \frac \alpha 8 |T^*|^2  \right) x,x \right \rangle \\
	 & \leq & \frac \alpha 4 w^2 (|T|+i  |T^*|) + \frac \alpha 4 w(|T||T^*|) 
	 + \left \| \left( 1-\frac {7\alpha}8 \right)   |T|^2+ \frac \alpha 8 |T^*|^2  \right \|.
	\end{eqnarray*}
Therefore, taking supremum over $\|x\|=1$, we get the required inequality.
	\end{proof}

Replacing $T$ by $T^*$ in Theorem \ref{th3}, and then using $\|T\|_{\alpha}=\|T^*\|_{\alpha}$, we also get the following inequality:
\begin{cor}\label{cor3}
	If $T\in \mathcal{B}(\mathcal{H})$, then
	\begin{eqnarray*}
		\|T\|_{\alpha}^2 & \leq & \frac \alpha 4 w^2(|T|+i |T^*|)+ \frac \alpha 4 w(|T| |T^*|)+ \left \|  \left(1-\frac{7 \alpha}{8} \right)|T^*|^2+ \frac \alpha 8 |T|^2 \right \|.
	\end{eqnarray*}
\end{cor}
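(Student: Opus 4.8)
The plan is to obtain this corollary as a formal consequence of Theorem~\ref{th3}, applied to $T^{*}$ in place of $T$. First I would record what the substitution does to the operators that appear: writing $S=T^{*}$, one has $|S|=(S^{*}S)^{1/2}=(TT^{*})^{1/2}=|T^{*}|$ and $|S^{*}|=(SS^{*})^{1/2}=(T^{*}T)^{1/2}=|T|$. Hence Theorem~\ref{th3} applied to $S$ reads
\[
\|T^{*}\|_{\alpha}^{2}\;\leq\;\frac{\alpha}{4}\,w^{2}\big(|T^{*}|+i|T|\big)+\frac{\alpha}{4}\,w\big(|T^{*}||T|\big)+\Big\|\Big(1-\tfrac{7\alpha}{8}\Big)|T^{*}|^{2}+\tfrac{\alpha}{8}|T|^{2}\Big\|.
\]

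Next I would rewrite each term so that it matches the statement. For the left-hand side I would invoke $\|T^{*}\|_{\alpha}=\|T\|_{\alpha}$ (see \cite[Proposition~2.6]{Sain_AFA_2021}). For the first numerical-radius term, using the elementary properties $w(\lambda A)=|\lambda|\,w(A)$ and $w(A)=w(A^{*})$, together with the self-adjointness of $|T|$ and $|T^{*}|$,
\[
w\big(|T^{*}|+i|T|\big)=w\big(i(|T|-i|T^{*}|)\big)=w\big(|T|-i|T^{*}|\big)=w\big((|T|+i|T^{*}|)^{*}\big)=w\big(|T|+i|T^{*}|\big),
\]
and similarly $w(|T^{*}||T|)=w\big((|T||T^{*}|)^{*}\big)=w(|T||T^{*}|)$; the third term is already in the desired form. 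Substituting these three identities into the displayed inequality yields exactly the asserted bound.

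I do not expect any genuine obstacle: the mathematical content is entirely contained in Theorem~\ref{th3}, and the corollary is a routine reformulation. The only step that warrants a moment's attention is the verification of the two numerical-radius identities above, namely that multiplying by $i$ and passing to the adjoint leave $w(\cdot)$ unchanged; both reduce to the standard facts $w(\lambda A)=|\lambda|\,w(A)$ and $w(A)=w(A^{*})$ valid for every $A\in\mathcal{B}(\mathcal{H})$.
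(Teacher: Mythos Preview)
Your proposal is correct and follows exactly the paper's approach: replace $T$ by $T^{*}$ in Theorem~\ref{th3} and use $\|T^{*}\|_{\alpha}=\|T\|_{\alpha}$. You simply make explicit the two invariances $w(|T^{*}|+i|T|)=w(|T|+i|T^{*}|)$ and $w(|T^{*}||T|)=w(|T||T^{*}|)$ that the paper leaves to the reader.
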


As an application of the bounds for the $\alpha$-norm, obtained in Theorem \ref{th3} and Corollary \ref{cor3}, we have a new upper bound for the numerical radius, which is given below.
\begin{cor}\label{cor4}
	If $T\in \mathcal{B}(\mathcal{H})$, then
	\begin{eqnarray*}
		w^2(T) & \leq & \frac \alpha 4 w^2(|T|+i |T^*|)+ \frac \alpha 4 w(|T| |T^*|) \\
		&& +\min \left \{ \left \|  \left(1-\frac{7 \alpha}{8} \right)|T^*|^2+  \frac \alpha 8 |T|^2 \right \|, \left \|  \left(1-\frac{7 \alpha}{8} \right)|T|^2+ \frac \alpha 8 |T^*|^2 \right \| \right \},
	\end{eqnarray*}
for all $\alpha \in [0,1].$
	\end{cor}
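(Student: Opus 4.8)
The plan is to read the inequality off directly from the two $\alpha$-norm estimates already established, namely Theorem \ref{th3} and Corollary \ref{cor3}, together with the left-hand inequality $w(T)\le \|T\|_{\alpha}$ in \eqref{eqv2}. Since $w^2(T)\le \|T\|_{\alpha}^2$ for every $\alpha\in[0,1]$, it suffices to bound $\|T\|_{\alpha}^2$ from above by the right-hand side.

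First I would invoke Theorem \ref{th3}, which gives
$$\|T\|_{\alpha}^2 \le \frac{\alpha}{4}w^2(|T|+i|T^*|)+\frac{\alpha}{4}w(|T||T^*|)+\Bigl\|\Bigl(1-\tfrac{7\alpha}{8}\Bigr)|T|^2+\tfrac{\alpha}{8}|T^*|^2\Bigr\|.$$
Next I would invoke Corollary \ref{cor3}, which supplies the companion bound with $|T|$ and $|T^*|$ interchanged in the last term:
$$\|T\|_{\alpha}^2 \le \frac{\alpha}{4}w^2(|T|+i|T^*|)+\frac{\alpha}{4}w(|T||T^*|)+\Bigl\|\Bigl(1-\tfrac{7\alpha}{8}\Bigr)|T^*|^2+\tfrac{\alpha}{8}|T|^2\Bigr\|.$$
The key observation is that the first two summands on the right are identical in both inequalities, so $\|T\|_{\alpha}^2$ is at most the common part plus the smaller of the two norm terms; taking this minimum and then combining with $w^2(T)\le\|T\|_{\alpha}^2$ yields the claim for the given $\alpha$.

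Since nothing in the argument pins down a particular $\alpha$, the estimate holds for every $\alpha\in[0,1]$, exactly as stated. There is essentially no obstacle here: all the analytic work has already been carried out in Theorem \ref{th3} and Corollary \ref{cor3} via the Kato inequality \eqref{kato1} and the Buzano inequality \eqref{buza}; the only point requiring a moment's care is that the minimum acts solely on the last term while the first two terms are shared between the two bounds. One could of course subsequently optimize over $\alpha\in[0,1]$, but the corollary simply records the estimate for each fixed $\alpha$.
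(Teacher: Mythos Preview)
Your argument is correct and is exactly the approach the paper intends: it states Corollary \ref{cor4} as an immediate application of Theorem \ref{th3} and Corollary \ref{cor3}, combined with $w(T)\le \|T\|_{\alpha}$ from \eqref{eqv2}, without spelling out the details. The only point worth noting (and it is already implicit in the paper's statement of Corollary \ref{cor3}) is the identification of the first two summands after replacing $T$ by $T^*$, which you have handled correctly.
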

	In particular, for $\alpha=1$ we have (see also in \cite{Jana_2022}),
	\begin{eqnarray}\label{eqn5}
		w^2(T) & \leq & \frac 1 4 w^2(|T|+i |T^*|)+ \frac 1 4 w(|T| |T^*|) 
		+\frac 1 8\left \|  |T^*|^2+   |T|^2 \right \|\\
		& \leq & \frac 12 \left \|  T^*T+TT^* \right \|\nonumber.
	\end{eqnarray}
	 Therefore, Corollary \ref{cor4} generalizes and improves on the inequality \eqref{eqn5} as well as the well-known inequality $w^2(T)\leq \frac 12 \left \|  T^*T+TT^* \right \|$, obtained by Kittaneh \cite{Kittaneh_STD_2005}.

	  Next, we note that every operator  $T\in \mathcal{B}(\mathcal{H})$  satisfies the following numerical radius inequality (see \cite{Kittaneh_2003}): $w(T)\leq \frac12 \| |T|+|T^*| \|.$ In \cite{Bhunia_RIM_2021}, it is  proved that $w^2(T) \leq  \left\| \alpha |T|^2+(1-\alpha)|T^*|^2 \right\|, \,\, \forall  \alpha \in [0,1].$
	 Now, it is natural to ask whether the following inequality  $$w(T) \leq  \left\| \alpha |T|+(1-\alpha)|T^*| \right\|, \,\, \mbox{$\forall \alpha \in [0,1]$}$$ holds or not.
In this connection we have the following result.

	%Now, at the end of this section, we prove an upper bound for the numerical radius of bounded linear operators, reads as in the following theorem.
	
	\begin{theorem}\label{th4}
		If $T\in \mathcal{B}(\mathcal{H}),$ then
		\begin{eqnarray*}
			w^2(T) &\leq & \left(\left\| \alpha |T|+(1-\alpha)|T^*| \right\|\right)  \|T\|,
		\end{eqnarray*}
	for all $\alpha \in [0,1].$
	\end{theorem}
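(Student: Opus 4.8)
The plan is to start from the unweighted Kato inequality \eqref{kato1} and then trade one of the two scalar factors it produces against the operator norm. Fix a unit vector $x \in \mathcal{H}$ and set $a = \langle |T|x, x\rangle$ and $b = \langle |T^*|x, x\rangle$. Kato's inequality \eqref{kato1} gives $|\langle Tx, x\rangle|^2 \le ab$. Since $\||T|\|^2 = \||T|^2\| = \|T^*T\| = \|T\|^2$ and likewise $\||T^*|\| = \|T\|$, both $a$ and $b$ lie in the interval $[0, \|T\|]$.

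The key elementary step is the pointwise estimate $ab \le \|T\|\,(\alpha a + (1-\alpha) b)$, valid for every $\alpha \in [0,1]$, which I would obtain from the identity
\[
\|T\|\,(\alpha a + (1-\alpha)b) - ab = \alpha a (\|T\| - b) + (1-\alpha) b (\|T\| - a),
\]
whose right-hand side is nonnegative because $0 \le a, b \le \|T\|$ and $\alpha,\,1-\alpha \ge 0$. Combining this with the previous display yields
\[
|\langle Tx, x\rangle|^2 \le \|T\|\,\bigl\langle \bigl(\alpha |T| + (1-\alpha)|T^*|\bigr) x, x\bigr\rangle \le \|T\|\,\bigl\|\alpha |T| + (1-\alpha)|T^*|\bigr\|,
\]
where the last step uses positivity of $\alpha|T| + (1-\alpha)|T^*|$. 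Taking the supremum over all unit vectors $x$ gives $w^2(T) \le \bigl\|\alpha|T| + (1-\alpha)|T^*|\bigr\|\,\|T\|$, which is the assertion.

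I do not expect a serious obstacle here; the only points needing care are the observation that $\langle |T|x, x\rangle \le \|T\|$ (and similarly for $|T^*|$), which is exactly what licenses the convex-combination trick, and the decision to split $ab = \alpha ab + (1-\alpha)ab$ before bounding each summand. One could instead invoke a weighted arithmetic--geometric inequality, but the displayed identity is the cleanest route and also makes transparent why it pays to minimize over $\alpha \in [0,1]$ afterwards, as in the abstract, and why the resulting bound never exceeds $\|T\|$ (since $\|\alpha|T|+(1-\alpha)|T^*|\| \le \alpha\|T\| + (1-\alpha)\|T\| = \|T\|$).
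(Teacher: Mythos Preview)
Your argument is correct. Both your proof and the paper's start from the same mixed Cauchy--Schwarz inequality $|\langle Tx,x\rangle|^2 \le ab$ with $a=\langle |T|x,x\rangle$, $b=\langle |T^*|x,x\rangle$, and both finish by bounding $\langle(\alpha|T|+(1-\alpha)|T^*|)x,x\rangle$ by its operator norm; the difference is only in the scalar step linking $ab$ to $\alpha a+(1-\alpha)b$. The paper splits $ab=(a^{\alpha}b^{1-\alpha})(a^{1-\alpha}b^{\alpha})$ and applies the weighted AM--GM inequality to each factor, obtaining first the symmetric bound
\[
|\langle Tx,x\rangle|^2 \le \bigl\|\alpha|T|+(1-\alpha)|T^*|\bigr\|\,\bigl\|\alpha|T^*|+(1-\alpha)|T|\bigr\|
\]
and only then relaxing the second factor to $\|T\|$. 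Your route instead uses the bounds $a,b\le\|T\|$ directly via the identity $\|T\|(\alpha a+(1-\alpha)b)-ab=\alpha a(\|T\|-b)+(1-\alpha)b(\|T\|-a)\ge 0$. Your version is slightly more elementary (no AM--GM needed), while the paper's route yields the sharper intermediate product-of-norms inequality before the final relaxation.
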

	\begin{proof}
		Let $x\in \mathcal{H}$ with $\|x\|=1.$ Then, it follows from the inequality \eqref{kato1} that
		\begin{eqnarray*}
			|\langle Tx,x\rangle|^2 &\leq & \langle |T|x,x\rangle \, \langle |T^*|x,x\rangle\\
			&=& \langle |T|x,x\rangle^{\alpha} \langle |T^*|x,x\rangle^{1-\alpha} \langle |T|x,x\rangle^{1-\alpha}  \langle |T^*|x,x\rangle^{\alpha}\\
			&\leq & (\alpha \langle |T|x,x\rangle+ (1-\alpha)\langle |T^*|x,x\rangle) ((1-\alpha)\langle |T|x,x\rangle+ \alpha \langle |T^*|x,x\rangle  )\\
			&=& \langle (\alpha |T| + (1-\alpha)|T^*|)x,x\rangle  \langle (\alpha |T^*| + (1-\alpha)|T|)x,x\rangle\\
			& \leq & \left(\| \alpha |T| + (1-\alpha)|T^*| \|\right)  \,\, \left(\| \alpha |T^*| + (1-\alpha)|T|\|\right)\\
			 & \leq & \left(\| \alpha |T| + (1-\alpha)|T^*| \|\right)   \|T\|.
		\end{eqnarray*}
	Therefore, taking supremum over $\|x\|=1$, we get 
	\begin{eqnarray*}
		w^2(T) &\leq & \left(\left\| \alpha |T|+(1-\alpha)|T^*| \right\|\right)  \|T\|,
	\end{eqnarray*}
as required.
	\end{proof}

	\begin{remark}
	Since Theorem \ref{th4} holds for all $\alpha\in [0,1]$, we can write that inequality in the following form:
	\begin{eqnarray}\label{impr1}
		w(T) &\leq & \sqrt{\left( \min_{\alpha \in [0,1]}\left\| \alpha |T|+(1-\alpha)|T^*| \right\| \right) \|T\|}.
	\end{eqnarray}
	Note that, the bound \eqref{impr1} is a considerable refinement of the well-known bound $w(T)\leq \|T\|$. To show proper refinement, we take the same matrix as in Remark \ref{rem1}. Then, by elementary calculations we see that 
$$\sqrt{\left( \min_{\alpha \in [0,1]}\left\| \alpha |T|+(1-\alpha)|T^*| \right\| \right) \|T\|}=\sqrt{\frac43 \times 2} < 2=\|T\|.$$
\end{remark}

 %\noindent

 \section{\textbf{Numerical radius of $(\alpha, \beta)$-normal operators}}
 \noindent In this section, we develop lower bounds for the numerical radius of the $(\alpha, \beta)$-normal operators.
 We observe that for  $T\in \mathcal{B}(\mathcal{H})$,
 \begin{eqnarray}\label{low1}
 	w(T)\geq \max\{ \|\Re(T)\|, \|\Im(T)\|\},
 \end{eqnarray}
  the proof of which follows from the Cartesian decomposition of $T.$ The  first lower bound of this section reads as follows: 
 
 \begin{theorem}\label{th5}
 	Let $T$ be an $(\alpha,\beta)$-normal operator. Then
 	$$ w^2(T)\geq \max \left\{ 1+\alpha^2, 1+\frac{1}{\beta^2}\right\} \frac{\|T\|^2}{4}+ \frac {\left| \|\Re(T)\|^2-\|\Im(T)\|^2  \right|}2 .$$
 \end{theorem}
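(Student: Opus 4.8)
The plan is to reduce everything to the elementary estimate $w(T) \ge \max\{\|\Re(T)\|, \|\Im(T)\|\}$ recorded in \eqref{low1}, together with two consequences of $(\alpha,\beta)$-normality. First I would note the algebraic identity
$$\Re(T)^2 + \Im(T)^2 = \frac12\left(T^*T + TT^*\right),$$
which follows at once by expanding the squares in the Cartesian decomposition $T = \Re(T) + i\Im(T)$. Since $\Re(T)$ and $\Im(T)$ are self-adjoint, $\|\Re(T)^2\| = \|\Re(T)\|^2$ and $\|\Im(T)^2\| = \|\Im(T)\|^2$; hence the triangle inequality for the operator norm gives
$$\|\Re(T)\|^2 + \|\Im(T)\|^2 = \|\Re(T)^2\| + \|\Im(T)^2\| \ge \left\|\Re(T)^2 + \Im(T)^2\right\| = \frac12\left\|T^*T + TT^*\right\|.$$

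Next I would bring in the hypothesis. From $\alpha^2 T^*T \le TT^*$ one gets $(1+\alpha^2)T^*T \le T^*T + TT^*$, and since the operator norm is monotone on positive operators, $\|T^*T+TT^*\| \ge (1+\alpha^2)\|T^*T\| = (1+\alpha^2)\|T\|^2$. Similarly, $TT^* \le \beta^2 T^*T$ rearranges to $\left(1+\frac{1}{\beta^2}\right)TT^* \le T^*T + TT^*$, whence $\|T^*T+TT^*\| \ge \left(1+\frac{1}{\beta^2}\right)\|TT^*\| = \left(1+\frac{1}{\beta^2}\right)\|T\|^2$, using $\|TT^*\| = \|T^*\|^2 = \|T\|^2$. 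Combining these with the previous paragraph,
$$\|\Re(T)\|^2 + \|\Im(T)\|^2 \ge \frac12\left\|T^*T+TT^*\right\| \ge \max\left\{1+\alpha^2,\, 1+\frac{1}{\beta^2}\right\}\frac{\|T\|^2}{2}.$$

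Finally I would assemble the bound using the identity $\max\{a,b\} = \frac{a+b}{2} + \frac{|a-b|}{2}$ with $a = \|\Re(T)\|^2$ and $b = \|\Im(T)\|^2$. Indeed, by \eqref{low1},
$$w^2(T) \ge \max\left\{\|\Re(T)\|^2, \|\Im(T)\|^2\right\} = \frac{\|\Re(T)\|^2 + \|\Im(T)\|^2}{2} + \frac{\left|\|\Re(T)\|^2 - \|\Im(T)\|^2\right|}{2},$$
and substituting the lower bound for $\|\Re(T)\|^2 + \|\Im(T)\|^2$ obtained above produces exactly the asserted inequality. The whole argument is a short chain of elementary steps; the only point that needs a moment's attention is checking that the $(\alpha,\beta)$-normal condition yields the constant $\max\{1+\alpha^2,\, 1+\frac{1}{\beta^2}\}$ multiplying $\|T\|^2$ (rather than $\|T^*T\|$ or $\|TT^*\|$ in isolation), but these three quantities all equal $\|T\|^2$, so no real difficulty arises.
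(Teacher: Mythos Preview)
Your proof is correct and follows essentially the same route as the paper: start from $w(T)\ge\max\{\|\Re(T)\|,\|\Im(T)\|\}$, rewrite the maximum via $\max\{a,b\}=\tfrac{a+b}{2}+\tfrac{|a-b|}{2}$, use the identity $\Re(T)^2+\Im(T)^2=\tfrac12(T^*T+TT^*)$ together with the triangle inequality, and then invoke $(\alpha,\beta)$-normality and monotonicity of the norm on positive operators to extract the constant $\max\{1+\alpha^2,1+1/\beta^2\}$. The only difference is presentational order; the ingredients and the chain of inequalities are identical.
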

 \begin{proof}
 	From the inequality \eqref{low1}, we have 
 	\begin{eqnarray}\label{low2}
 		w^2(T) &\geq &\frac{\| \Re(T)\|^2+ \|\Im(T)\|^2}{2} + \frac{ | \| \Re(T)\|^2- \|\Im(T)\|^2|}{2} \nonumber \\
 		&\geq& \frac{\| (\Re(T) )^2 + (\Im(T) )^2 \|}{2} + \frac{| \| \Re(T)\|^2- \|\Im(T)\|^2|}{2} \nonumber\\
 		&=& \frac{\| |T|^2+|T^*|^2 \|}{4} + \frac{| \| \Re(T)\|^2- \|\Im(T)\|^2| }{2} \nonumber\\
 		&\geq& \frac{\| |T|^2+\alpha^2 |T|^2 \|}{4} + \frac{| \| \Re(T)\|^2- \|\Im(T)\|^2| }{2} \,\,\,(\mbox{since, $|T^*|^2\geq \alpha^2 |T|^2$}) \nonumber\\
 		&=& \frac{(1+\alpha^2) \|T\|^2}{4} + \frac{| \| \Re(T)\|^2- \|\Im(T)\|^2|}{2}.
 	\end{eqnarray}
 Similarly, we also have
 \begin{eqnarray}\label{low3}
 	w^2(T) &\geq & \frac{\left (1+\frac 1 {\beta^2} \right) \|T\|^2}{4} + \frac{| \| \Re(T)\|^2- \|\Im(T)\|^2|}{2}.
 \end{eqnarray}
Therefore, the required inequality follows by combining the inequalities in \eqref{low2} and \eqref{low3}.
 \end{proof}

 \begin{remark} 
 	Clearly, $\max \left\{ 1+\alpha^2, 1+\frac{1}{\beta^2}\right\}> 1.$ Therefore,  for an  $(\alpha,\beta)$-normal operator $T$,
 	\begin{eqnarray}\label{sab}
 	w(T)	&\geq & \sqrt{\max \left\{ 1+\alpha^2, 1+\frac{1}{\beta^2}\right\} \frac{\|T\|^2}{4}+ \frac {\left| \|\Re(T)\|^2-\|\Im(T)\|^2  \right|}2} \nonumber \\
 		&\geq & \sqrt{\max \left\{ 1+\alpha^2, 1+\frac{1}{\beta^2}\right\} \frac{\|T\|^2}{4} } \nonumber\\
 		&=& \max \left\{ \sqrt{1+\alpha^2}, \sqrt{1+\frac{1}{\beta^2}} \right\} \frac{\|T\|}{2}  \\
 		&> & \frac{\|T\|}2 \nonumber.
 	\end{eqnarray}
 Therefore, we would like to remark that for the $(\alpha,\beta)$-normal operators, Theorem \ref{th5} gives  stronger bound than the well-known bound $w(T)\geq \frac{\|T\|}{2}$. Note that the inequality \eqref{sab} is also developed in \cite[Theorem 3.1]{SAB}.
 %(ii) In particular, for every hyponormal operator $T\in \mathcal{B}(\mathcal{H})$, we have that $  $ $w(T)	\geq \sqrt{ \frac{\|T\|^2}{2}+ \frac {\left| \|\Re(T)\|^2-\|\Im(T)\|^2  \right|}2} \geq \frac{\|T\|}{\sqrt{2}}$ (even if it is known that $w(T)=\|T\|$).
 	\end{remark}
 
 To obtain another improvement we first note the following numerical radius inequality (see \cite{Bhunia_arxiv}): For every $T\in \mathcal{B}(\mathcal{H})$,
 \begin{eqnarray}\label{low4}
 	w(T)\geq \frac 1 {\sqrt{2}} \max \left \{ \|\Re(T)+\Im(T)\|,  \|\Re(T)-\Im(T)\| \right\}.
 \end{eqnarray} 
By using the inequality \eqref{low4} and using the similar arguments as Theorem \ref{th5}, we also prove the following lower bound for the numerical radius of  the $(\alpha,\beta)$-normal operators.

\begin{theorem}\label{th6}
	Let $T$ be an $(\alpha,\beta)$-normal operator. Then
	$$ w^2(T)\geq \max \left\{ 1+\alpha^2, 1+\frac{1}{\beta^2}\right\} \frac{\|T\|^2}{4}+ \frac {\left| \|\Re(T)+ \Im(T)\|^2 - \|\Re(T)- \Im(T)\|^2 \right|}4 .$$
	
\end{theorem}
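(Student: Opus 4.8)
The plan is to mimic the proof of Theorem \ref{th5} almost verbatim, replacing the Cartesian-decomposition lower bound \eqref{low1} by the rotated version \eqref{low4}. First I would set $A=\Re(T)$ and $B=\Im(T)$, and observe that \eqref{low4} gives
\begin{eqnarray*}
 w^2(T)\geq \frac12\max\left\{\|A+B\|^2,\|A-B\|^2\right\}
 = \frac{\|A+B\|^2+\|A-B\|^2}{4}+\frac{\big|\|A+B\|^2-\|A-B\|^2\big|}{4}.
\end{eqnarray*}
So the second summand in the asserted bound already appears; the work is to show the first summand $\frac{\|A+B\|^2+\|A-B\|^2}{4}$ dominates $\max\{1+\alpha^2,1+\tfrac1{\beta^2}\}\frac{\|T\|^2}{4}$.

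Next I would handle $\|A+B\|^2+\|A-B\|^2$. Using $\|A\pm B\|^2=\|(A\pm B)^2\|$ is false in general, so instead I would use that for any self-adjoint $S$, $\|S\|^2=\|S^2\|$, together with the fact that the sum of two positive operators has norm at least the norm of either: more precisely, $\|A+B\|^2+\|A-B\|^2 = \|(A+B)^2\|+\|(A-B)^2\| \geq \|(A+B)^2+(A-B)^2\| = \|2A^2+2B^2\| = 2\|A^2+B^2\|$, where the middle inequality is the triangle inequality for the operator norm applied to the two positive operators $(A+B)^2$ and $(A-B)^2$ (so that the norm of the sum is at least the norm of each, hence at least... — actually one needs $\|P\|+\|Q\|\geq\|P+Q\|$, which is just the triangle inequality, giving the displayed line in the reverse direction). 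Let me instead argue directly: since $(A\pm B)^2\geq 0$, we have $(A+B)^2+(A-B)^2\leq \big(\|A+B\|^2+\|A-B\|^2\big)I$ only after taking norms on each block — cleaner is $\|(A+B)^2\|\geq \|(A+B)^2 x\|$... The right tool is: for positive operators $P,Q$, $\|P+Q\|\leq\|P\|+\|Q\|$, hence $\|A+B\|^2+\|A-B\|^2\geq\|(A+B)^2+(A-B)^2\|=2\|A^2+B^2\|=\tfrac12\||T|^2+|T^*|^2\|$ (using $A^2+B^2=\tfrac12(T^*T+TT^*)$, the same identity used in Theorem \ref{th5}). Then the $(\alpha,\beta)$-normality hypothesis $\alpha^2|T|^2\leq|T^*|^2$ gives $\||T|^2+|T^*|^2\|\geq\|(1+\alpha^2)|T|^2\|=(1+\alpha^2)\|T\|^2$, and symmetrically $|T^*|^2\leq\beta^2|T|^2$ would instead be applied after interchanging — careful: from $|T^*|^2\leq\beta^2|T|^2$ one gets $\||T|^2+|T^*|^2\|\geq\||T^*|^2+\tfrac1{\beta^2}|T^*|^2\|=(1+\tfrac1{\beta^2})\|T^*\|^2=(1+\tfrac1{\beta^2})\|T\|^2$, exactly as in \eqref{low3}. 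Combining, $\frac{\|A+B\|^2+\|A-B\|^2}{4}\geq\frac{\||T|^2+|T^*|^2\|}{8}\cdot 2 = \max\{1+\alpha^2,1+\tfrac1{\beta^2}\}\frac{\|T\|^2}{4}$ after tracking the constants.

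I would then assemble: $w^2(T)\geq \frac{\|A+B\|^2+\|A-B\|^2}{4}+\frac{\big|\|A+B\|^2-\|A-B\|^2\big|}{4}\geq \max\{1+\alpha^2,1+\tfrac1{\beta^2}\}\frac{\|T\|^2}{4}+\frac{\big|\|\Re(T)+\Im(T)\|^2-\|\Re(T)-\Im(T)\|^2\big|}{4}$, which is the claim. The main obstacle — really the only subtle point — is getting the constant right in the chain $\|A+B\|^2+\|A-B\|^2\geq 2\|A^2+B^2\|$ and then relating $A^2+B^2$ to $\tfrac12(|T|^2+|T^*|^2)$; the triangle inequality for positive operators must be applied in the correct direction, and one must remember that for a self-adjoint operator $S$, $\|S^2\|=\|S\|^2$, so that $\|(A+B)^2\|=\|A+B\|^2$. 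Everything else is a routine repetition of the estimates in Theorem \ref{th5}.
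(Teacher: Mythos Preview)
Your approach is exactly what the paper intends: it does not write out a proof but says explicitly that Theorem \ref{th6} follows from \eqref{low4} ``using the similar arguments as Theorem \ref{th5},'' which is precisely your plan. The only slip in your constant-tracking is that $2\|A^2+B^2\|=\||T|^2+|T^*|^2\|$ (not $\tfrac12\||T|^2+|T^*|^2\|$), since $A^2+B^2=\tfrac12(|T|^2+|T^*|^2)$; with this corrected the chain reads cleanly as $\tfrac{\|A+B\|^2+\|A-B\|^2}{4}\geq\tfrac{\||T|^2+|T^*|^2\|}{4}\geq\max\{1+\alpha^2,1+\tfrac1{\beta^2}\}\tfrac{\|T\|^2}{4}$, and your triangle-inequality step $\|(A+B)^2\|+\|(A-B)^2\|\geq\|(A+B)^2+(A-B)^2\|$ is indeed in the correct direction.
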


\begin{remark}
	Clearly, we see that for an $(\alpha,\beta)$-normal operator $T$,
	 \begin{eqnarray*}
	w(T)	&\geq & \sqrt{\max \left\{ 1+\alpha^2, 1+\frac{1}{\beta^2}\right\} \frac{\|T\|^2}{4}+ \frac {\left| \|\Re(T)+ \Im(T)\|^2 - \|\Re(T)- \Im(T)\|^2 \right|}4} \\
	&\geq & \sqrt{\max \left\{ 1+\alpha^2, 1+\frac{1}{\beta^2}\right\} \frac{\|T\|^2}{4} }\\
	&=& \max \left\{ \sqrt{1+\alpha^2}, \sqrt{1+\frac{1}{\beta^2}} \right\} \frac{\|T\|}{2}  \\
	&> & \frac{\|T\|}2.
\end{eqnarray*}
Therefore,  for the  $(\alpha,\beta)$-normal operators, Theorem \ref{th6} gives  stronger bound than the well-known bound $w(T)\geq \frac{\|T\|}{2}$. 
%For hyponormal operator $T\in \mathcal{B}(\mathcal{H})$,  $w(T)	\geq \sqrt{ \frac{\|T\|^2}{2}+ \frac {\left| \|\Re(T)+ \Im(T)\|^2 - \|\Re(T)- \Im(T)\|^2 \right|}4} \geq \frac{\|T\|}{\sqrt{2}}.$
\end{remark}

At the end, we would like to remark that if  $T\in \mathcal{B}(\mathcal{H})$ is an invertible operator, then $w(T)> \frac{\|T\|}{2}.$ This implies that when $w(T)=\frac {\|T\|}2$, then $T$ must be a non-invertible operator. 

\bibliographystyle{amsplain}

\end{document}